\def\1{\bm{1}}
\def\vzero{{\bm{0}}}
\def\vtheta{{\bm{\theta}}}
\def\vbeta{{\bm{\beta}}}
\def\vlambda{{\bm{\lambda}}}
\def\vrho{{\bm{\rho}}}
\def\vu{{\bm{u}}}
\def\vx{{\bm{x}}}
\def\mPhi{{\bm{\Phi}}}
\def\mSigma{{\bm{\Sigma}}}
\def\mPhi{{\bm{\Phi}}}
\DeclareMathAlphabet{\mathsfit}{\encodingdefault}{\sfdefault}{m}{sl}
\SetMathAlphabet{\mathsfit}{bold}{\encodingdefault}{\sfdefault}{bx}{n}
\def\gA{{\mathcal{A}}}
\def\gB{{\mathcal{B}}}
\def\gC{{\mathcal{C}}}
\def\gN{{\mathcal{N}}}
\def\gS{{\mathcal{S}}}
\def\gT{{\mathcal{T}}}
\def\gW{{\mathcal{W}}}
\def\sR{{\mathbb{R}}}
\newcommand{\Var}{\mathrm{Var}}
\newcommand{\Cov}{\mathrm{Cov}}
\DeclareMathOperator{\supp}{sup}
\DeclareMathOperator{\convinlaw}{\xrightarrow{\mathcal{L}}}
\theoremstyle{plain}
\newtheorem{theorem}{Theorem}[section]
\newtheorem{proposition}[theorem]{Proposition}
\newtheorem{corollary}[theorem]{Corollary}
\theoremstyle{definition}
\newtheorem{definition}[theorem]{Definition}
\newtheorem{assumption}[theorem]{Assumption}
\theoremstyle{remark}
\def \as {\xrightarrow[]{\text{a.s.}}}
\begin{document}

\title{On the Accuracy of Hotelling-Type Tensor Deflation:\\ A Random Tensor Analysis}

\name{Mohamed El Amine Seddik, Maxime Guillaud, Alexis Decurninge}
\address{Mathematical and Algorithmic Sciences Laboratory, Huawei Technologies France}

\maketitle

\abstract{Leveraging on recent advances in random tensor theory, we consider in this paper a rank-$r$ asymmetric spiked tensor model of the form $\sum_{i=1}^r \beta_i\gA_i + \gW$ where $\beta_i\geq 0$ and the $\gA_i$'s are rank-one tensors such that $\langle \gA_i, \gA_j \rangle\in [0, 1]$ for $i\neq j$, based on which we provide an asymptotic study of Hotelling-type tensor deflation in the large dimensional regime. Specifically, our analysis characterizes the singular values and alignments at each step of the deflation procedure, for asymptotically large tensor dimensions. This can be used to construct consistent estimators of different quantities involved in the underlying problem, such as the signal-to-noise ratios $\beta_i$ or the alignments between the different signal components $\langle \gA_i, \gA_j \rangle$.}

\keywords{Random Tensor Theory, Hotelling Deflation, Low-rank Tensor Decomposition, Parameter Estimation.}

\section{\uppercase{Introduction}}\label{sec:introduction}
The analysis of random tensors has attracted significant attention in the last decade since the introduction of the concept of tensor PCA, which generalizes principal component analysis to high-order arrays. The first model introduced in \cite{montanari2014statistical} is the so-called spiked tensor model of the form $\beta \vx^{\otimes d} + \gW/\sqrt{n}$ where $\vx\in \sR^n$ is some high-dimensional unit vector referred to as a \textit{spike}, $\gW$ is a \textit{symmetric} random tensor of order $d$ having standard Gaussian entries and $\beta\geq 0$ is a parameter controlling the signal-to-noise ratio.

Follow-up results have improved the understanding of the behavior of the spiked model and allowed to identify theoretical and/or algorithmic guarantees in terms of efficient signal recovery. In particular, several works  \cite{perry2016statistical,lesieur2017statistical,jagannath2020statistical,chen2021phase,goulart2021random} have focused on the asymptotic (large dimensional) regime $n\to \infty$.
We briefly summarize their main findings as follows: for $d\geq 3$, it has been shown that there exists a statistical threshold $\beta_{stat}=O(1)$ below which it is information-theoretically impossible to recover or even detect the spike, while above $\beta_{stat}$ recovery is theoretically possible finding a critical point of the square loss. Moreover, the asymptotic alignment $\langle \vx, \vu \rangle$ between $\vx$ and a critical point $\vu$ is given in terms of $\beta$. Besides, since almost all tensor problems, e.g. finding the critical points, are NP-hard \cite{hillar2013most}, many researchers were interested in exhibiting an algorithmic threshold for $\beta$ above which recovery could be possible with a polynomial-time algorithm. The authors in \cite{montanari2014statistical} introduced a method for estimating $\vx$ based on tensor unfolding and showed that spike recovery is possible above the algorithmic threshold $\beta_{algo}=O(n^{ \frac{d-2}{4} })$.

These ideas were further generalized to the \textit{asymmetric} spiked tensor model of the form $\beta \vx_1\otimes \cdots \otimes \vx_d + \gW/\sqrt{\sum_i n_i}$, where $\vx_i\in \sR^{n_i}$ are unit vectors and $\gW$ is a random tensor with standard Gaussian i.i.d.\@ entries. In particular, \cite{arous2021long} provided an analysis of the unfolding method for asymmetric tensors and determined the algorithmic threshold to be $\beta_{algo}=O(n^{ \frac{d-2}{4} })$ when $n_i=n$ for all $i$, while \cite{seddik2021random} showed the existence of a statistical threshold $\beta_s = O(1)$ above which a local solution $\vu_i$ of the MLE aligns with the signal, and further quantified the asymptotic alignments $\langle \vx_i, \vu_i \rangle$.

\paragraph*{Contribution:}
In this paper, we address the extension of these ideas to a more general setting, namely asymmetric \emph{low-rank} spiked tensors of the form $\sum_{i=1}^r \beta_i \vx_{i, 1}\otimes \cdots \otimes \vx_{i, d} + \gW/\sqrt{\sum_i n_i}$ where $\vx_{i,j}\in \sR^{n_j}$ are unit vectors.
Specifically, we consider the study of a simple deflation procedure, first introduced by Hotelling in the context of matrix principal component analysis \cite{Hotelling1933} and still used in modern applications such as the recent AlphaTensor model \cite{fawzi2022discovering}, which consists in iterated rank-one approximations followed by subtraction of the estimated rank-one component.

We focus on the case where the spike components are not orthogonal to each other. While the orthogonal case (i.e. $\langle \vx_{i,k}, \vx_{j,k} \rangle=0$ for all $i\neq j$) trivially boils down to the rank-one model studied in \cite{chen2021phase,seddik2021random}, in the considered setting, i.e. $\langle \vx_{i,k}, \vx_{j,k} \rangle \in (0, 1)$ for $i\neq j$, the behavior of the deflation method is more complex, as depicted in Figure~\ref{fig_orth_vs_corr} in the simplified case $\vx_{i,1}=\dots=\vx_{i,d}=\vx_{i}$ for $i=1,2$.

\begin{figure}[t!]
\centering
\includegraphics[width=.4\textwidth]{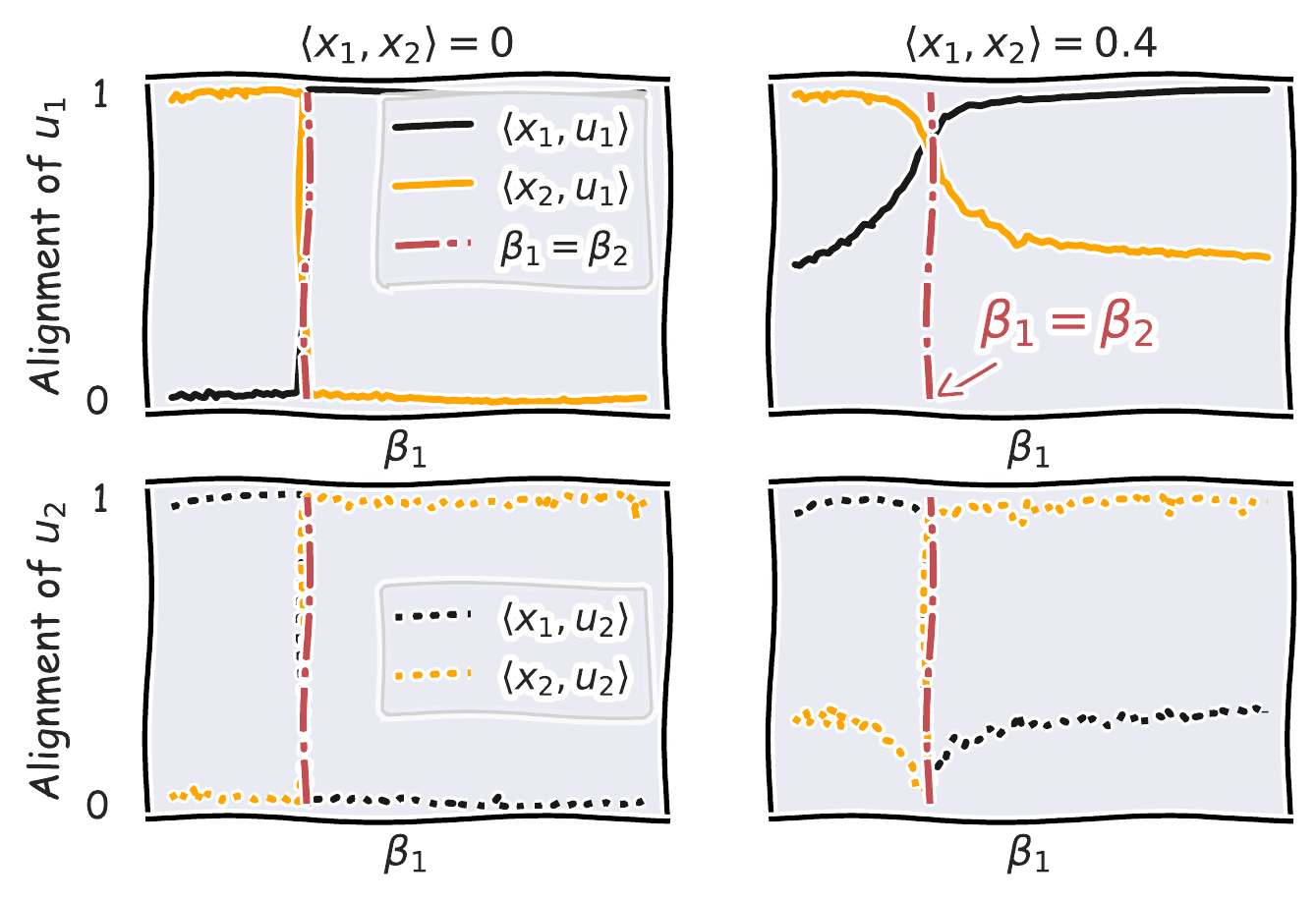}
\caption{Result of tensor deflation applied to a noisy rank-two spiked model $\beta_1\vx_1^{\otimes 3} + \beta_2 \vx_2^{\otimes 3} + \gW/\sqrt{n}$. The first row corresponds to the first deflation step (yielding $\vu_1$) while the second row corresponds to the second deflation step (yielding $\vu_2$). The plots depict the alignments $\langle \vx_i, \vu_j\rangle$ in terms of $\beta_1$ for a fixed $\beta_2$. In the orthogonal case $\langle \vx_1, \vx_2 \rangle = 0$ (left column), deflation is shown to successfully identify the strongest and second strongest components, i.e. $\vu_1$ aligns with $\vx_1$ and $\vu_2$ aligns with $\vx_2$ when $\beta_1>\beta_2$, while $\vu_1$ aligns with $\vx_2$ and $\vu_2$ aligns with $\vx_1$ when $\beta_1<\beta_2$. For the correlated setting, on the other hand (right column, $\langle \vx_1, \vx_2 \rangle = 0.4$), the alignment of $(\vu_1,\vu_2)$ with $(\vx_1,\vx_2)$ is imperfect in the region where $\beta_1 \approx \beta_2$.}
\label{fig_orth_vs_corr}
\end{figure}

In this work, we characterize this behavior by estimating the alignments $\langle \vx_{i,k}, \vu_{j,k} \rangle$ in the asymmetric case, in the high-dimensional regime when $n_i\to \infty$, using the random matrix approach developed in \cite{seddik2021random}. Furthermore, as a by-product of this analysis, we introduce a method to consistently estimate the underlying SNRs $\beta_i$ and the alignments $\langle \vx_{i,k}, \vx_{j,k} \rangle$ and $\langle \vx_{i,k}, \vu_{j,k} \rangle$ from the quantities computed at each step of the deflation procedure.

\paragraph*{Notations:} Scalars are denoted by lowercase letters. Vectors are denote by bold lowercase letters. Tensors are denoted as $\gA, \gB, \gC$. $T_{i_1\ldots i_d}$ denotes the entry $(i_1, \ldots, i_d)$ of tensor $\gT$. The inner product between two order-$d$ tensors $\gA$ and $\gB$ is denoted $\langle \gA, \gB \rangle = \sum_{i_1,\ldots, i_d} A_{i_1\ldots i_d}B_{i_1\ldots i_d}$. The $\ell_2$-norm of $\gA$ is $\Vert \gA\Vert = \sqrt{ \langle \gA, \gA \rangle }$. For any vectors $\vu_{1},\dots, \vu_{d}$, contractions of a tensor $\gA$ are denoted by $\gA(\vu_{1},\dots,\vu_d)=\sum A_{i_1\ldots i_d}u_{1i_1}\dots u_{di_d}$. The notation $\Vert \gA\Vert_{op} = \supp_{\Vert \vu_i \Vert = 1} \vert \gA(\vu_{1},\dots,\vu_d)\vert$ stands for the spectral norm. $[n]$ denotes the set $\{1,\ldots,n\}$.

\section{\uppercase{Model and Main Results}}
We start by describing formally our considered model. Let $r\geq 1$ and $d\geq 3$, we consider the following rank-$r$ order-$d$ spiked tensor model
\begin{align}
\gT_1 = \sum_{i=1}^r \beta_i \vx_{i,1} \otimes \cdots \otimes \vx_{i,d} + \frac{1}{\sqrt n} \mathcal \gW
\end{align}
where $W_{i_1\ldots i_d}\sim \gN(0, 1)$ i.i.d., $\vx_{i, j} \in\sR^{n_j}$ are unit vectors, $n = \sum_{i=1}^d n_i$ and $\beta_i \geq 0$.
\paragraph*{Tensor deflation model:} In order to recover the underlying signal components (i.e. the terms $\beta_i \vx_{i,1} \otimes \cdots \otimes \vx_{i,d}$), Hotelling deflation consists in successive rank-one approximations followed by subtraction of the rank-1 estimate. This can be implemented by computing $\gT_{2},\gT_{3},\dots$ sequentially through
\begin{align}
\gT_{i+1} = \gT_i - \hat\lambda_i \hat\vu_{i, 1} \otimes \cdots \otimes \hat\vu_{i, d} \quad \mathrm{for} \quad i\in [r]
\end{align}
where $\hat\lambda_i \hat\vu_{i, 1} \otimes \cdots \otimes \hat\vu_{i, d}$ is a critical point of the loss $\big\Vert \gT_i -  \lambda_i \vu_{i, 1} \otimes \cdots \otimes \vu_{i, d} \big\Vert_F^2$ which corresponds to the rank-one approximation of $\gT_i$ \cite{lim2005singular}.
The critical points satisfy the Karush-Kuhn-Tucker conditions derived from the Lagrangian of the latter objective, i.e.
\begin{align}\label{eq_kkt}
\gT_i (\hat\vu_{i, 1}, \ldots, \hat\vu_{i, j-1}, \cdot, \hat\vu_{i, j+1} ,\ldots, \hat\vu_{i, d} )= \hat\lambda_i \hat\vu_{i, j}
\end{align}
with $\Vert \vu_{i,j}\Vert = 1$, for all $j\in[d]$.
In the following, we aim at computing the limits of $\hat\lambda_i$ and $\langle\vx_{i,k}, \hat\vu_{j,k}\rangle $ for $i,j\in[r]$ and $k\in[d]$ when the dimensions $n_i$ grow large. For notational convenience, in the sequel the notation $\lim Q$ stands for the limit of the quantity $Q$ when $n_i\to \infty$.

\paragraph*{Sketch of the analytical approach:} We follow the approach developed in \cite{seddik2021random}, whereby each tensor $\gT_i $ is associated to a structured random matrix $\mPhi_d(\gT_i, \hat\vu_{i,1}, \ldots, \hat\vu_{i,d}) \in \sR^{n\times n}$ where the mapping $\mPhi_d$ is defined in \cite[Section 5]{seddik2021random}. Then, the characterization of the limits of $\hat\lambda_i$ and the alignments $\langle\vx_{i,k}, \hat\vu_{j,k}\rangle $ when $n_i\to \infty$ boils down to the computation of the Stieltjes transform of the limiting spectral measure of $\mPhi_d(\gT_i, \hat\vu_{i,1}, \ldots, \hat\vu_{i,d})$. Hence, we need the following definition and technical assumptions.
\begin{definition}\label{def_measure} Let $\mu$ be the probability measure with Stieltjes transform $g(z) = \sum_{i=1}^d g_i(z)$ verifying $\Im[g(z)] > 0$ for $\Im[z] > 0$, where $g_i(z)$ satisfies $g_i^2(z) - (g(z) + z) g_i(z) - c_i = 0$, for $z\notin \gS(\mu)$ and $\gS(\mu)$ stands for the support of $\mu$.
\end{definition}
\begin{assumption}\label{assumptions} We assume that as $n_i\to \infty$, $r=O(1)$ and denote $c_i = \lim \frac{n_i}{\sum_{j=1}^d n_j}$. We further assume that there exists a sequence of critical points such that $\hat\lambda_i\as \lambda_i$, $ \vert\langle \vx_{i,k}, \hat\vu_{j,k} \rangle\vert\as \rho_{ijk}$ and $\vert\langle \hat\vu_{i,k}, \hat\vu_{j,k} \rangle\vert\as\eta_{ijk}$ such that $\lambda_i \notin \gS(\mu)$  and $\rho_{ijk} > 0$.
\end{assumption}
We therefore have the following result\footnote{The proof of Theorem \ref{theorem_spectrum} follows similar arguments as in \cite{seddik2021random} and requires some additional arguments for controlling the statistical dependencies between the $\hat\vu_{i,j}$'s and the noise $\gW$.} which characterizes the limiting spectral measure of $\mPhi_d(\gT_i, \hat\vu_{i,1}, \ldots, \hat\vu_{i,d})$.
\begin{theorem}\label{theorem_spectrum} Under Assumption \ref{assumptions}, the empirical spectral measure of $\mPhi_d(\gT_i, \hat\vu_{i,1}, \ldots, \hat\vu_{i,d})$ converges to the deterministic measure $\mu$ defined in Definition \ref{def_measure}.
\end{theorem}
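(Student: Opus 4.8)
The plan is to follow the random-matrix strategy of \cite[Section 5]{seddik2021random}, which associates to the tensor $\gT_i$ the structured symmetric matrix $\mPhi_d(\gT_i,\hat\vu_{i,1},\dots,\hat\vu_{i,d})$ built from iterated contractions of $\gT_i$ along the directions $\hat\vu_{i,j}$, and to show that its empirical spectral measure converges to $\mu$. First I would decompose $\gT_i = \sum_{\ell<i}(\beta_\ell \vx_{\ell,1}\otimes\cdots\otimes\vx_{\ell,d} - \hat\lambda_\ell \hat\vu_{\ell,1}\otimes\cdots\otimes\hat\vu_{\ell,d}) + \sum_{\ell\ge i}\beta_\ell \vx_{\ell,1}\otimes\cdots\otimes\vx_{\ell,d} + \gW/\sqrt n$, i.e. as a finite-rank ($O(r)=O(1)$) deterministic-in-the-limit perturbation plus the Gaussian noise term. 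Since $\mPhi_d$ is (multi)linear in its tensor argument in an appropriate sense, $\mPhi_d(\gT_i,\hat\vu_{i,\cdot})$ splits into $\mPhi_d(\gW/\sqrt n,\hat\vu_{i,\cdot})$ plus a perturbation of bounded rank whose operator norm stays bounded under Assumption \ref{assumptions} (using $\hat\lambda_\ell\as\lambda_\ell$, the unit-norm constraints, and $\lambda_i\notin\gS(\mu)$). By the standard rank-perturbation / Weyl-interlacing argument, the limiting spectral measure is unchanged by this finite-rank term, so it suffices to analyze $\mPhi_d(\gW/\sqrt n,\hat\vu_{i,\cdot})$.

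Next I would compute the Stieltjes transform of the limiting spectral measure of $\mPhi_d(\gW/\sqrt n,\hat\vu_{i,\cdot})$ via a resolvent / self-consistent-equation argument, exactly as in \cite{seddik2021random}: writing $\mathbf G(z) = (\mPhi_d(\gW/\sqrt n,\hat\vu_{i,\cdot}) - z\mI_n)^{-1}$ in $d\times d$ block form indexed by the tensor modes, one derives that the normalized block traces $g_j(z) = \tfrac1n \Tr \mathbf G_{jj}(z)$ satisfy, in the limit, a coupled system, and with $g(z)=\sum_{j=1}^d g_j(z)$ the per-block equation becomes $g_j^2(z) - (g(z)+z)g_j(z) - c_j = 0$, which is precisely Definition \ref{def_measure} with the branch fixed by $\Im[g(z)]>0$ for $\Im[z]>0$; the ratio $n_j/\sum_k n_k \to c_j$ enters through the block sizes. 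The self-consistent equation is obtained by a resolvent-expansion / Gaussian integration-by-parts (Stein's lemma) computation combined with concentration of the block traces around their means.

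The main obstacle — and the reason for the footnote in the statement — is that the contraction directions $\hat\vu_{i,j}$ are \emph{not} independent of the noise $\gW$: they are defined through the KKT fixed-point equations \eqref{eq_kkt} for $\gT_i$, which itself contains $\gW$ (and, through the deflation recursion, the earlier $\hat\vu_{\ell,j}$). Thus the Gaussian integration-by-parts step cannot treat $\hat\vu_{i,j}$ as deterministic. The way I would handle this is the device used in \cite{seddik2021random}: condition on the $\hat\vu_{i,j}$'s and argue that the relevant functionals of $\gW$ concentrate, showing that the leftover dependence contributes only $o(1)$ corrections to the self-consistent equation. Concretely, one controls terms of the form $\tfrac1n\sum \partial(\hat u)/\partial W \cdot (\text{resolvent entry})$ by bounding the sensitivity of the fixed point to perturbations of $\gW$ (via the implicit function theorem applied to \eqref{eq_kkt}, using $\lambda_i\notin\gS(\mu)$ to keep the relevant Jacobian invertible) and showing these sensitivities are $o(1)$ in the appropriate norm; a net/union-bound argument over an $\epsilon$-net of candidate directions, together with Gaussian concentration, then makes the estimate uniform. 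Once the dependence is shown negligible, the computation reduces to the independent-direction case and the self-consistent system closes as claimed, yielding convergence to $\mu$.
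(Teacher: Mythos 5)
Your proposal matches the paper's intended argument: the paper gives no detailed proof of Theorem~\ref{theorem_spectrum}, only a footnote stating that it ``follows similar arguments as in \cite{seddik2021random} and requires some additional arguments for controlling the statistical dependencies between the $\hat\vu_{i,j}$'s and the noise $\gW$,'' which is precisely the route you take (finite-rank reduction, resolvent/self-consistent equation yielding $g_j^2-(g+z)g_j-c_j=0$, and a conditioning/concentration argument for the $\hat\vu_{i,j}$--$\gW$ dependence). Your write-up is consistent with, and considerably more explicit than, what the paper itself provides.
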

As shown in \cite{seddik2021random}, in the case $c_i=\frac1d$ for all $i\in [d]$, the measure $\mu$ describes a semi-circle or Wigner-type law of compact support $\gS(\mu) = [-2\sqrt{ \frac{d-1}{d} }, 2\sqrt{ \frac{d-1}{d} }]$, the Stieltjes transform of which writes explicitly as
\begin{align}\label{Stieltjes_transform_g}
g(z) = \frac{-zd + d \sqrt{ z^2 - \frac{ 4(d-1) }{d} } }{ 2 (d-1)}, \quad z\notin \gS(\mu)
\end{align}

\paragraph*{Limiting spectral norms and alignments:} We introduce the quantities $\alpha_{ijk} =  \lim \vert\langle \vx_{i,k}, \vx_{j,k} \rangle\vert, \, f(z) = z + g(z), \, h_i(z) = - \frac{c_i}{g_i(z) }$ that shall be used subsequently. The main result brought by this paper describes the asymptotic singular values and alignments obtained after each tensor deflation step as stated by the following theorem.
\begin{theorem}\label{main_theorem}
Assume that Assumption \ref{assumptions} holds. Then, $\lambda_i$, $\rho_{ijk}$ and $\eta_{ijk}$ satisfy the following system of equations
\begin{small}
\begin{align*}
\begin{dcases}
f( \lambda_j) + \sum_{i=1}^{j-1}  \lambda_i \prod_{k=1}^d \eta_{i j k} - \sum_{i=1}^r \beta_i \prod_{k=1}^d \rho_{ijk} = 0 \text{\ , $1\leq j \leq r$}\\
h_\ell( \lambda_j ) \rho_{kj\ell} + \sum_{i=1}^{j-1} \lambda_i \rho_{ki\ell} \prod_{m\neq\ell}^d \eta_{ijm} - \sum_{i=1}^r \beta_i \alpha_{ik\ell} \prod_{m\neq\ell}^d \rho_{ijm} = 0\\ \text{ $1\leq \ell\leq d, 1\leq j,k \leq r$}\\
h_\ell( \lambda_j ) \eta_{kj\ell} + g_\ell(\lambda_k) \prod_{m\neq \ell}^d \eta_{kjm} + \sum_{i=1}^{j-1} \lambda_i \eta_{ik\ell} \prod_{m\neq \ell}^d \eta_{ijm}  +\ldots \\ - \sum_{i=1}^r \beta_i \rho_{ik\ell} \prod_{m\neq \ell}^d \rho_{ijm} = 0 \text{,\ $1\leq \ell\leq d, 1\leq j<k \leq r$}\\
\end{dcases}
\end{align*}
\end{small}
\end{theorem}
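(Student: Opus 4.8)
The plan is to convert the Karush--Kuhn--Tucker conditions \eqref{eq_kkt} into a family of scalar identities, expand $\gT_j$ along its signal/noise/deflation parts, and then evaluate the limits of the remaining noise contractions with the random-matrix tools of \cite{seddik2021random} (the matrix $\mPhi_d$ and the limiting measure $\mu$ of Theorem~\ref{theorem_spectrum}). Each of the three families of equations in the statement will arise from one family of such contractions, so the three cases can be treated in parallel.

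First I would fix $j\in[r]$, $\ell\in[d]$ and contract \eqref{eq_kkt} against three kinds of vectors: against $\hat\vu_{j,\ell}$, which gives $\hat\lambda_j=\gT_j(\hat\vu_{j,1},\dots,\hat\vu_{j,d})$; against $\vx_{k,\ell}$, which gives $\hat\lambda_j\langle\vx_{k,\ell},\hat\vu_{j,\ell}\rangle=\gT_j(\hat\vu_{j,1},\dots,\vx_{k,\ell},\dots,\hat\vu_{j,d})$ with $\vx_{k,\ell}$ placed in the $\ell$-th slot; and against $\hat\vu_{k,\ell}$, which gives $\hat\lambda_j\langle\hat\vu_{k,\ell},\hat\vu_{j,\ell}\rangle=\gT_j(\hat\vu_{j,1},\dots,\hat\vu_{k,\ell},\dots,\hat\vu_{j,d})$. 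Next, substituting $\gT_j=\sum_{i=1}^r\beta_i\vx_{i,1}\otimes\cdots\otimes\vx_{i,d}+\tfrac{1}{\sqrt n}\gW-\sum_{i=1}^{j-1}\hat\lambda_i\hat\vu_{i,1}\otimes\cdots\otimes\hat\vu_{i,d}$, using linearity of $\gA(\cdots)$ in $\gA$ together with $(\va_1\otimes\cdots\otimes\va_d)(\vu_1,\dots,\vu_d)=\prod_k\langle\va_k,\vu_k\rangle$, every rank-one term of the signal and of the deflation part collapses to a product of inner products; e.g. the first identity becomes $\hat\lambda_j=\sum_i\beta_i\prod_k\langle\vx_{i,k},\hat\vu_{j,k}\rangle+\tfrac{1}{\sqrt n}\gW(\hat\vu_{j,1},\dots,\hat\vu_{j,d})-\sum_{i<j}\hat\lambda_i\prod_k\langle\hat\vu_{i,k},\hat\vu_{j,k}\rangle$, and the other two are analogous with one factor of each product replaced by a mixed inner product. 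Passing to the limit and invoking Assumption~\ref{assumptions} for the elementary factors, the whole problem reduces to identifying the limits of the three noise contractions $\tfrac{1}{\sqrt n}\gW(\hat\vu_{j,1},\dots,\hat\vu_{j,d})$, $\tfrac{1}{\sqrt n}\gW(\hat\vu_{j,1},\dots,\vx_{k,\ell},\dots,\hat\vu_{j,d})$ and $\tfrac{1}{\sqrt n}\gW(\hat\vu_{j,1},\dots,\hat\vu_{k,\ell},\dots,\hat\vu_{j,d})$.

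This is the heart of the argument, and where $\mPhi_d$ enters. Writing $\mN_j:=\tfrac{1}{\sqrt n}\mPhi_d(\gW,\hat\vu_{j,1},\dots,\hat\vu_{j,d})$, Theorem~\ref{theorem_spectrum} and Assumption~\ref{assumptions} (namely $\lambda_j\notin\gS(\mu)$) show that the resolvent of $\mN_j$ at $\lambda_j$ is well defined for large $n$ and admits a block-diagonal deterministic equivalent controlled mode by mode by the $g_\ell(\lambda_j)$. From this one obtains, by the arguments of \cite{seddik2021random}, that the contraction with $\hat\vu_j$ in every mode converges to $-g(\lambda_j)=-\sum_m g_m(\lambda_j)$, and the one carrying a fresh deterministic direction $\vx_{k,\ell}$ in mode $\ell$ converges to $-\big(\sum_{m\neq\ell}g_m(\lambda_j)\big)\rho_{kj\ell}$ --- the missing $g_\ell$ term reflecting the absence of noise self-interaction in a mode not aligned with $\hat\vu_j$. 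For the third contraction the vector sitting in mode $\ell$ is $\hat\vu_{k,\ell}$, which is \emph{not} independent of $\gW$; decomposing it into a component in the finite-dimensional span of $\{\vx_{\cdot,\ell}\}\cup\{\hat\vu_{1,\ell},\dots,\hat\vu_{k-1,\ell}\}$ plus a residual living in the noise bulk at scale $\lambda_k$, and propagating this residual through the resolvent of $\mN_j$, I expect the limit to be $-\big(\sum_{m\neq\ell}g_m(\lambda_j)\big)\eta_{kj\ell}-g_\ell(\lambda_k)\prod_{m\neq\ell}\eta_{kjm}$, the extra term being precisely the self-interaction that survives because $\hat\vu_{k,\ell}$ does correlate with the noise; restricting to $j<k$ then just selects this as the canonical form of the (symmetric) constraint on the $\eta$'s. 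Controlling these statistical dependencies between the extracted $\hat\vu_{i,k}$ and $\gW$ (by leave-one-out / interpolation arguments, as needed for Theorem~\ref{theorem_spectrum}) is the step I expect to be the main obstacle.

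Finally I would plug the three noise limits back into the limiting forms of the identities and collect the $\lambda_j$-terms using $f(z)=z+g(z)$ and the relation $h_\ell(z)=z+\sum_{m\neq\ell}g_m(z)$, which follows from $g_\ell^2(z)-(g(z)+z)g_\ell(z)-c_\ell=0$ together with $h_\ell=-c_\ell/g_\ell$; this yields exactly the three displayed families of equations. One last point, since Assumption~\ref{assumptions} only provides convergence of $|\langle\cdot,\cdot\rangle|$, is to fix a sign for each $\hat\vu_{j,k}$ and check that the equations are consistent with that choice, exactly as in \cite{seddik2021random}.
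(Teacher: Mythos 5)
Your proposal is correct and its skeleton coincides with the paper's: both contract the KKT conditions \eqref{eq_kkt} against $\hat\vu_{j,\ell}$, $\vx_{k,\ell}$ and $\hat\vu_{k,\ell}$, split $\gT_j$ into signal, noise and already-deflated parts, and reduce everything to the limits of three noise contractions; your claimed limits $-g(\lambda_j)$, $-(\sum_{m\neq\ell}g_m(\lambda_j))\rho_{kj\ell}$ and $-(\sum_{m\neq\ell}g_m(\lambda_j))\eta_{kj\ell}-g_\ell(\lambda_k)\prod_{m\neq\ell}\eta_{kjm}$ are exactly what is needed, and your identity $h_\ell(z)=z+g(z)-g_\ell(z)$, derived from the quadratic in Definition~\ref{def_measure}, is the correct bookkeeping step. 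The one place where you diverge from the paper is the evaluation of those noise contractions: the paper first establishes $\Var[\hat\lambda_j]=O(n^{-1})$ (and likewise for the alignments) so that everything concentrates, and then computes the \emph{expectations} of the contracted KKT identities by Stein's lemma, i.e.\ Gaussian integration by parts, which converts $\E[W_{i_1\dots i_d}\,\varphi(\gW)]$ into derivative terms $\E[\partial\varphi/\partial W_{i_1\dots i_d}]$; the dependence of the $\hat\vu_{i,k}$ on $\gW$ — which you correctly single out as the main obstacle and propose to handle by a decomposition of $\hat\vu_{k,\ell}$ plus leave-one-out control through the resolvent of $\mPhi_d(\gW,\cdot)$ — is then absorbed automatically into those derivative terms, which are what produce the $g_\ell$'s after invoking Theorem~\ref{theorem_spectrum}. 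The two routes land on the same limits; the Stein route is more direct for the third family of equations precisely because the correlation between $\hat\vu_{k,\ell}$ and $\gW$ never has to be isolated by hand, while your resolvent/deterministic-equivalent route makes the origin of the extra $g_\ell(\lambda_k)\prod_{m\neq\ell}\eta_{kjm}$ self-interaction term more transparent. Your closing remark about fixing signs (since Assumption~\ref{assumptions} only controls absolute values) is a detail the paper's sketch omits but that is indeed needed.
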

\begin{proof}[Sketch of the proof] We use similar arguments as in \cite{seddik2021random}.
We first show that $\Var[\hat\lambda_j] = O(n^{-1})$ and use a concentration argument to show that $\hat\lambda_j$ concentrates around its expectation. Simlarly, the same property holds for the alignments.
Then, we evaluate the expectation of the scalar product between \eqref{eq_kkt} and $\vx_i$ or $\hat\vu_i$ using Stein's Lemma\footnote{$\mathbb{E}[Wf(W)] = \mathbb{E}[f'(W)]$ for $W\sim\mathcal{N}(0, 1)$.}.
\end{proof}

\begin{figure}[t!]
\includegraphics[width=.49\textwidth]{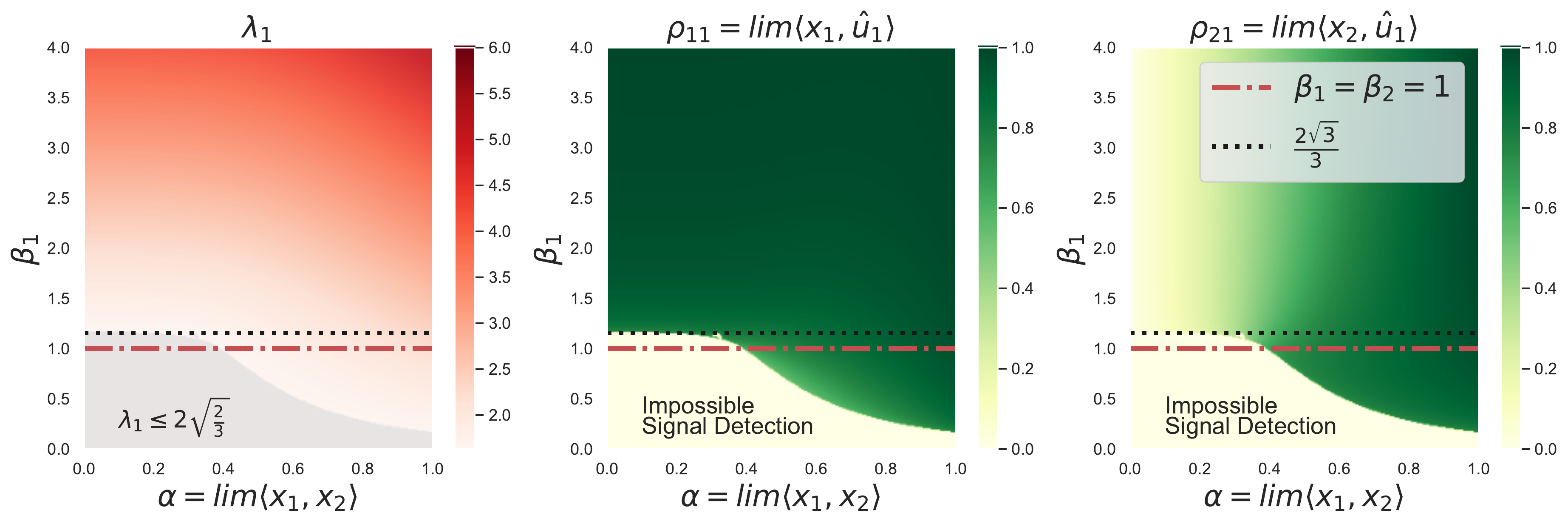}
\includegraphics[width=.49\textwidth]{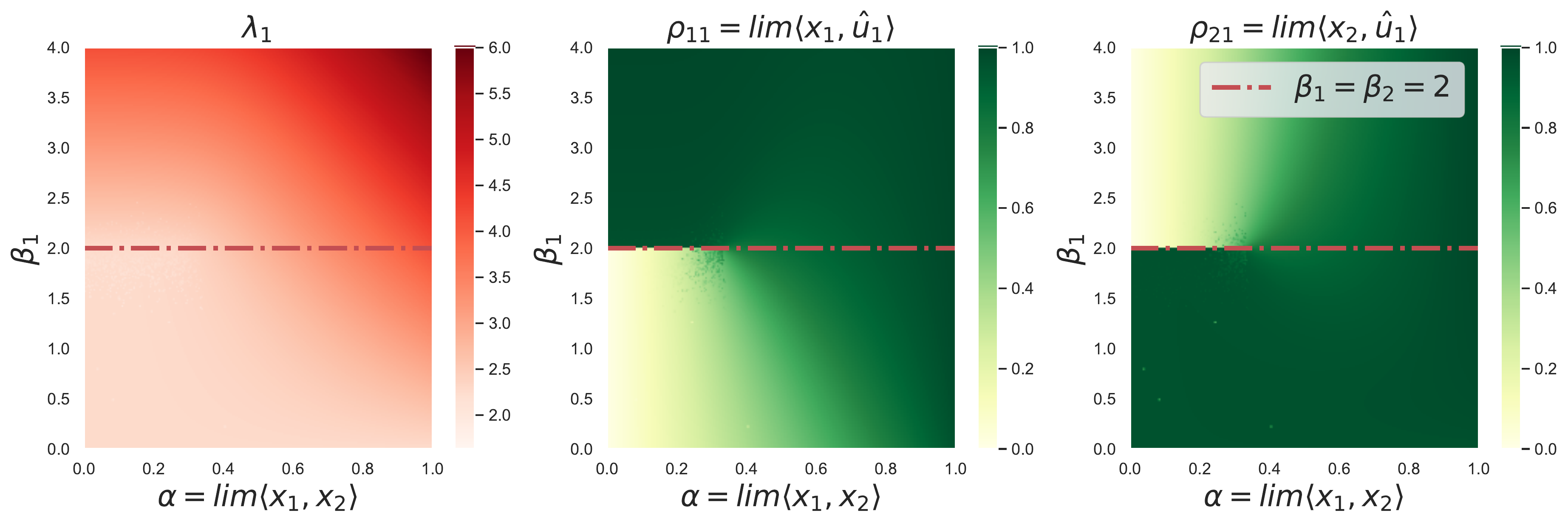}
\caption{Phase diagram of the two spikes model in \eqref{eq_two_spikes_model}. First row corresponds to $\beta_2 = 1$ and second row for $\beta_2=2$. The first column depicts $\lambda_1$ varying $\beta_1$ and $\alpha$, while the remaining columns depict the asymptotic alignments between the $\vx_i$'s and $\hat\vu_1$. The figures were obtained by solving the first three equations in \eqref{eq_system_two_spikes}.}
\label{fig_phase}
\end{figure}

\begin{figure*}[t!]
\begin{tikzpicture}
\node[anchor=south west] (img) {\includegraphics[width=\textwidth]{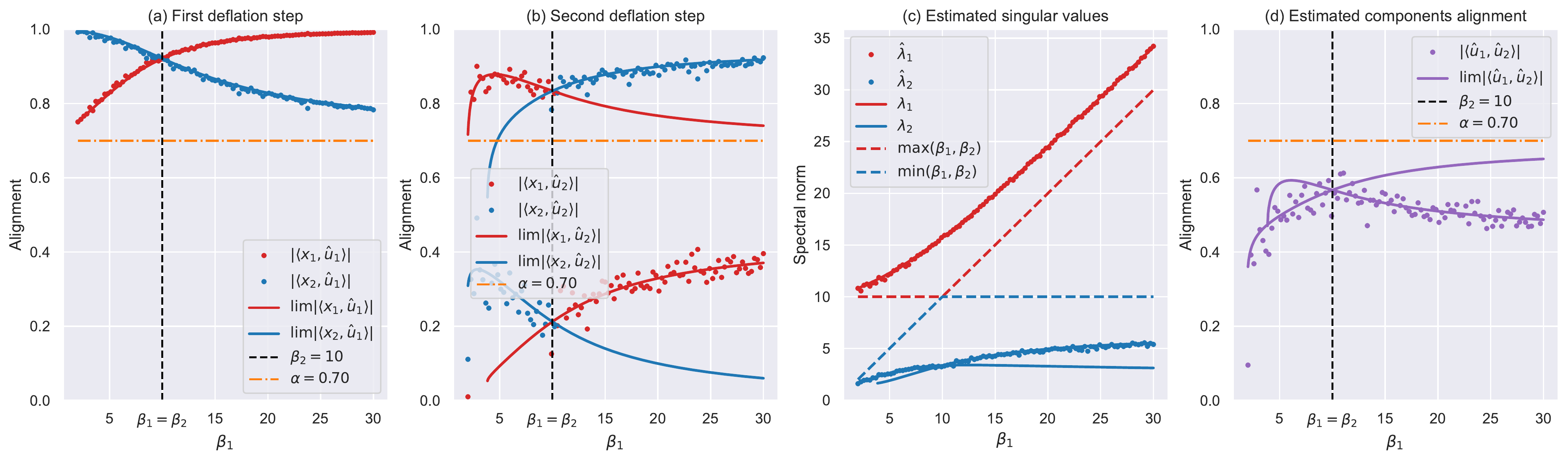}};
\end{tikzpicture}
\caption{Illustration of $\langle \vx_i, \hat\vu_j \rangle$ for $(i,j)\in\{1, 2 \}$, $\langle \hat\vu_1, \hat\vu_2 \rangle$, $\hat\lambda_1$ and $\hat\lambda_2$ vs. their limits for $\beta_2=10$ and $\alpha = \langle \vx_1, \vx_2 \rangle = 0.7$ of the two spikes tensor model in \eqref{eq_two_spikes_model}. (a) shows the alignments between the signal components and the first singular vectors corresponding to the best rank-one approximation of $\gT_1$. (b) shows the alignments with the second singular vectors computed after deflation. (c) depicts the singular values. (d) shows the alignments between the singular vectors computed at each step of the deflation procedure. Simulations were performed on a tensor of dimensions $(50, 50, 50)$.}
\label{figure_alignments}
\end{figure*}

\begin{figure}[b!]
\includegraphics[width=.49\textwidth]{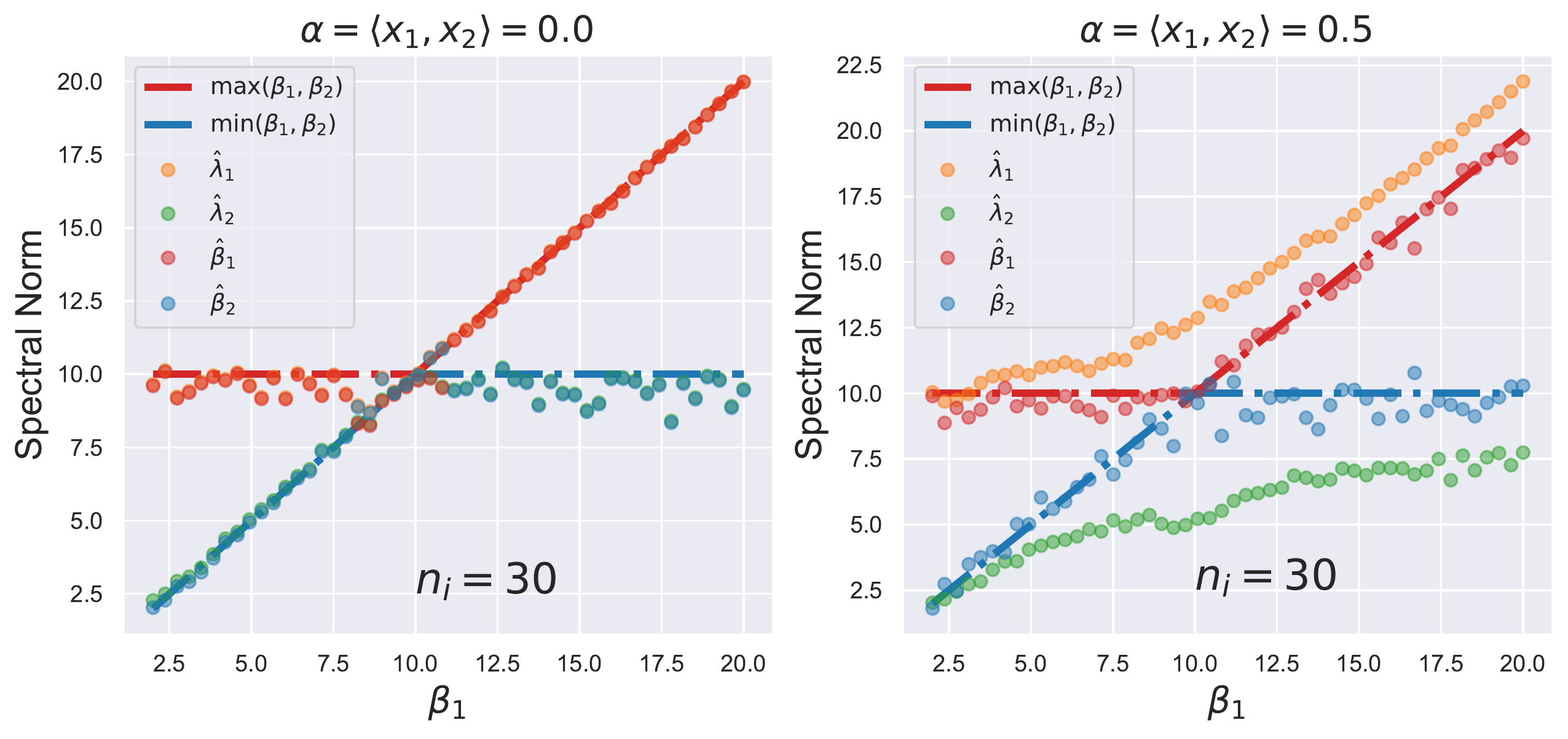}
\includegraphics[width=.49\textwidth]{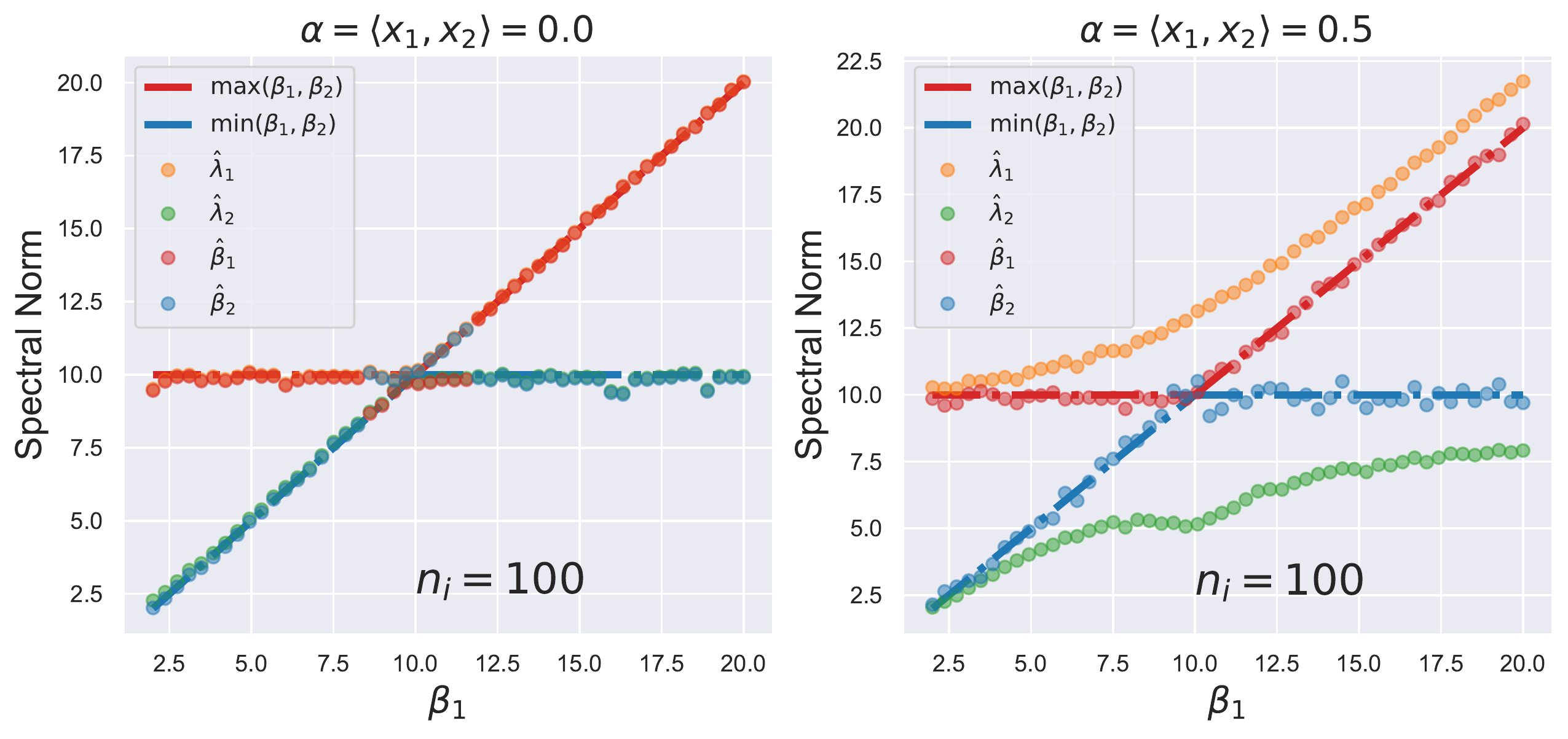}
\caption{Estimation of the $\beta_i$'s by solving the system $\psi(\hat\vlambda,\cdot,\cdot)=\boldsymbol{0}$ given $\hat\lambda_1$, $\hat\lambda_2$ and $\hat \eta = \langle \hat\vu_1, \hat\vu_2 \rangle$ estimated from a two steps deflation on a tensor distributed as in \eqref{eq_two_spikes_model}. First row corresponds to $n_i=30$ and second row to $n_i=100$.}
\label{fig_estimation_beta}
\end{figure}

\paragraph*{Particular case of a rank-2, order-3 tensor:} For the sake of clarity, let us consider the example of a rank-$2$ order-$3$ spiked tensor with $n_1=n_2=n_3$, thus
\begin{align}\label{eq_two_spikes_model}
\gT_1 = \sum_{i=1}^2 \beta_i \vx_{i, 1} \otimes \vx_{i, 2} \otimes \vx_{i, 3} + \frac{1}{\sqrt{n}}\gW
\end{align}
Furthermore, we assume that for all $i\neq j$ and each $k\in[3]$, $\lim\vert \langle \vx_{i,k}, \vx_{j,k} \rangle \vert = \alpha\in [0, 1]$. In this case, since all the dimensions $n_i$ are equal, the limits of $\vert \langle \vx_{i,k}, \hat\vu_{j,k} \rangle \vert$ and of $\vert \langle \hat\vu_{1,k}, \hat\vu_{2,k} \rangle \vert$ are both independent from $k$ by symmetry. Therefore, we drop their dependence on $k$ in our notations. Hence, the system of equations in Theorem \ref{main_theorem} reduces to seven equations, detailed in the following corollary.
\begin{corollary}\label{corollary} Denote $\rho_{ij} = \lim \vert \langle \vx_{i,k}, \hat\vu_{j,k} \rangle \vert$ for $i,j\in [2]$ and $\eta = \lim \vert \langle \hat\vu_{1,k}, \hat\vu_{2,k} \rangle \vert$ and suppose that Assumption \ref{assumptions} holds, then $ \lambda_i$, $\rho_{ij}$ and $\eta$ satisfy $\psi(\vlambda,\vbeta,\vrho)=\boldsymbol{0}$ with $\vlambda=(\lambda_1,\lambda_2,\eta)$, $\vrho=(\rho_{11},\rho_{12},\rho_{21},\rho_{22})$, $\vbeta=(\beta_1,\beta_2,\alpha)$ and
\begin{small}
\begin{equation}\label{eq_system_two_spikes}
\psi(\vlambda,\vbeta,\vrho) =
\left(\begin{array}{c}
f(\lambda_1) - \beta_1 \rho_{11}^3 - \beta_2 \rho_{21}^3 \\
h( \lambda_1) \rho_{11} - \beta_1 \rho_{11}^2 - \beta_2 \alpha \rho_{21}^2\\
h( \lambda_1) \rho_{21} - \beta_1 \alpha \rho_{11}^2 - \beta_2 \rho_{21}^2\\
f( \lambda_2 ) +  \lambda_1 \eta^3 - \beta_1 \rho_{12}^3 - \beta_2 \rho_{22}^3\\
h( \lambda_2 ) \rho_{12} +  \lambda_1 \rho_{11} \eta^2 - \beta_1 \rho_{12}^2 - \beta_2 \alpha \rho_{22}^2 \\
h( \lambda_2 ) \rho_{22} +  \lambda_1 \rho_{21} \eta^2 - \beta_1 \alpha \rho_{12}^2 - \beta_2 \rho_{22}^2 \\
h( \lambda_2) \eta + q( \lambda_1 ) \eta^2 - \beta_1 \rho_{11} \rho_{12}^2 - \beta_2 \rho_{21} \rho_{22}^2
\end{array}\right)
\end{equation}
\end{small}
where $h(z) = \frac{-1}{g(z)}$ and $q(z) = z + \frac{g(z)}{3}$ with $g(z)$ given by \eqref{Stieltjes_transform_g} for $d=3$ and we recall that $f(z) = z + g(z)$.
\end{corollary}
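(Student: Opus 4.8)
The plan is to obtain Corollary~\ref{corollary} as a direct specialization of the general system in Theorem~\ref{main_theorem} to the case $r=2$, $d=3$, $c_1=c_2=c_3=\tfrac13$, and then to verify that the symmetry assumptions (all $n_i$ equal, $\lim|\langle\vx_{i,k},\vx_{j,k}\rangle|=\alpha$ independent of $k$) collapse the indexed families $\rho_{ijk}$ and $\eta_{ijk}$ to $k$-independent scalars, leaving exactly seven scalar equations. So the work is bookkeeping plus one small simplification of the coefficient functions $g_\ell$, $h_\ell$, $f$.

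First I would record that when $c_\ell=\tfrac1d$ for all $\ell$, the defining quadratics $g_\ell^2-(g+z)g_\ell-c_\ell=0$ become identical for all $\ell$, so $g_\ell(z)=\tfrac1d g(z)$ for each $\ell$, and $g(z)$ is given explicitly by \eqref{Stieltjes_transform_g}. Consequently $h_\ell(z)=-c_\ell/g_\ell(z)=-(1/d)/((1/d)g(z))=-1/g(z)$, which is exactly the $h(z)$ of the corollary (with $d=3$), and $f(z)=z+g(z)$ is unchanged. The only genuinely new coefficient is the one multiplying the $\eta$-term in the third block of Theorem~\ref{main_theorem}: there the coefficient of $\prod_{m\neq\ell}\eta_{kjm}$ is $g_\ell(\lambda_k)=\tfrac13 g(\lambda_k)$, but the full term in that equation is $h_\ell(\lambda_j)\eta_{kj\ell}+g_\ell(\lambda_k)\prod_{m\neq\ell}\eta_{kjm}$; after imposing $k$-independence this reads $-\tfrac1{g(\lambda_2)}\eta + \tfrac13 g(\lambda_1)\eta^2$, and matching the corollary's seventh row $h(\lambda_2)\eta+q(\lambda_1)\eta^2-\dots$ forces $q(z)=z+\tfrac{g(z)}{3}$. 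Wait — this needs care: in Theorem~\ref{main_theorem} the third-block term has $g_\ell(\lambda_k)$ without an added $\lambda_k$, so the ``$z+$'' part of $q$ must come from elsewhere, namely from how the $j=2$, $k=1$ deflation sum $\sum_{i=1}^{j-1}\lambda_i\eta_{ik\ell}\prod_{m\neq\ell}\eta_{ijm}$ with $i=1$, $k=1$ contributes $\lambda_1\eta_{11\ell}\prod_{m\neq\ell}\eta_{1jm}$; since $\eta_{11\ell}$ should be interpreted as $1$ (self-alignment of $\hat\vu_1$), this gives $\lambda_1\eta^2$, and combined with $\tfrac13 g(\lambda_1)\eta^2$ yields $(\lambda_1+\tfrac13 g(\lambda_1))\eta^2=q(\lambda_1)\eta^2$. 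I would spell out this convention ($\eta_{iik}=1$, $\rho$ and $\eta$ nonnegative so absolute values can be dropped) explicitly, since it is the one place where the reduction is not purely mechanical.

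Then I would simply enumerate: the first block of Theorem~\ref{main_theorem} for $j=1$ gives $f(\lambda_1)-\beta_1\rho_{11}^3-\beta_2\rho_{21}^3=0$ (empty deflation sum), and for $j=2$ gives $f(\lambda_2)+\lambda_1\eta^3-\beta_1\rho_{12}^3-\beta_2\rho_{22}^3=0$, using $\prod_{k=1}^3\eta_{1,2,k}=\eta^3$ and $\prod_{k=1}^3\rho_{i,j,k}=\rho_{ij}^3$. The second block, with $\alpha_{ik\ell}=1$ when $i=k$ and $\alpha_{ik\ell}=\alpha$ when $i\neq k$, produces four equations (the $(j,k)$ pairs $(1,1),(1,2),(2,1),(2,2)$) matching rows 2, 3, 5, 6; and the third block with $j<k$, i.e.\ $(j,k)=(1,2)$, produces the single seventh equation. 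Finally I would note $\rho_{ij}>0$ (Assumption~\ref{assumptions}), which justifies dividing out one factor of $\rho_{kj\ell}$ in the second block so the $h(\lambda_j)\rho$ term appears linearly as written. I do not expect any real obstacle here — the proof is a transcription argument — but the subtle point worth getting exactly right, and the thing I would double-check against the $r=2$ figures, is the indexing/convention in the $\eta$-equation (which contractions survive, and that $\eta_{11k}=1$), since a sign or an off-by-one there would be the only way to go wrong.
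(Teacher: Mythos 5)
Your proposal is correct and matches the paper's own (implicit) proof: the corollary is presented as a direct specialization of Theorem~\ref{main_theorem} to $r=2$, $d=3$, $c_\ell=\tfrac13$, using the equality of the $n_\ell$ to make $g_\ell=\tfrac{g}{3}$, $h_\ell=h$ and to drop the index $k$ from $\rho_{ijk}$ and $\eta_{ijk}$, which is exactly the transcription you carry out. You also correctly resolve the one delicate point --- the seventh equation comes from the pair $(j,k)=(2,1)$ with the convention $\eta_{11\ell}=1$, which is what produces $q(\lambda_1)=\lambda_1+\tfrac{g(\lambda_1)}{3}$ --- although your closing enumeration momentarily reverts to writing $(j,k)=(1,2)$, contradicting the (correct) derivation you gave just above.
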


Fixing $\vbeta=(\beta_1,\beta_2,\alpha)$, one can solve $\psi(\vlambda,\vbeta,\vrho)=\boldsymbol{0}$ in $(\vlambda,\vrho)$ while ensuring that $0\leq \eta,\rho_{ij} \leq 1$ and $\lambda_1, \lambda_2> 2\sqrt{\frac{2}{3}}$.
This provides a fixed point equation satisfied by the asymptotic limits of the spectral norms $\hat\lambda_i$ and the alignments $ \langle \vx_{i,k}, \hat\vu_{j,k} \rangle $ and $\langle \hat\vu_{1,k}, \hat\vu_{2,k} \rangle$.

Note that the first three equations in \eqref{eq_system_two_spikes} only involve $ \lambda_1$, $\rho_{11}$ and $\rho_{21}$ and are decoupled from the last four equations. Therefore, solving them allows to obtain the \textit{phase diagram} related to the dominant singular mode $(\hat\lambda_1,\hat\vu_1)$, depicted in Figure~\ref{fig_phase}. It shows that when $\beta_2$ is not large enough (e.g. $\beta_2=1$, top row), there exists a region (varying $\beta_1$ and $\alpha$) where it is information-theoretically impossible to detect a signal, while outside this region estimation becomes possible with the MLE (in that case, the estimated $\hat\vu_1$ is shown to be correlated with both $\vx_1$ and $\vx_2$).
For $\beta_2$ sufficiently large (e.g. $\beta_2=2$, bottom row of Figure~\ref{fig_phase}), signal detection is always possible and the singular vector $\hat\vu_1$ presents a higher alignment with the signal components having the highest SNR $\beta_i$ (see the two columns on the right).

Moreover, we illustrate  in Figure~\ref{figure_alignments} the matching between $\hat\vlambda=(\hat\lambda_1, \hat\lambda_2, \hat\eta )$, $\hat\vrho=(\hat\rho_{11},\hat\rho_{12},\hat\rho_{21},\hat\rho_{22})$ (where the rank-one approximations are performed using tensor power iteration initialized by tensor SVD \cite{auddy2022estimating}) and their asymptotic limits $\vlambda,\vrho$.
Note that, for some $\beta_1$, the equation $\psi(\cdot,\vbeta,\cdot)=\boldsymbol{0}$ has two distinct solutions. These two solutions correspond to two different sequences of critical points.
However, in practice, the chosen initialization favor one sequence of critical points as can be seen in Figure~\ref{figure_alignments}.

\section{Consistent SNR estimation}
Having set the relationship between the $\beta_i$'s and the limits of the different spectral norms and alignments in our problem, one can exploit this mapping to design a consistent estimator of the underlying SNRs $\beta_i$'s. Indeed, $\hat\vlambda$ can directly be estimated from $\hat\vu_i$ obtained with the deflation.

Then we denote $\hat\vbeta$ and $\hat\vrho$ the estimates of $\vbeta$ and $\vrho$ as the vectors satisfying $\psi(\hat\vlambda,\hat\vbeta,\hat\vrho)= \boldsymbol{0}$ with $\psi$ defined in \eqref{eq_system_two_spikes}. The additional condtions required for the existence and the uniqueness of such solution is not studied in the present paper and shall be considered in an extended version. In our simulations, we find one solution for $\psi(\hat\vlambda,\cdot,\cdot)=\boldsymbol{0}$ if $\hat\lambda_1, \hat\lambda_2>2 \sqrt{2/3}$.
We illustrate the result of such estimation in Figure~\ref{fig_estimation_beta} where we see that $(\hat\beta_1,\hat\beta_2)$ consistently estimate $(\max(\beta_1,\beta_2),\min(\beta_1,\beta_2))$,  while the naive estimator $(\hat\beta_1,\hat\beta_2)=(\hat\lambda_1,\hat\lambda_2)$ exhibits a large error in the non-orthogonal case ($\alpha=0.5$, second column). Moreover, as the dimensions of the tensor increase, Fig.~\ref{fig_estimation_beta} shows (comparing first and second row) that the estimation is consistent. This consistency can be related to a classical concentration phenomenon  \cite{benaych2011fluctuations}.

\section{Conclusion}
We have provided an analysis of a tensor deflation method in the high-dimensional regime and assuming a low-rank spiked tensor model with correlated signal components. Our analysis allows precise description of the asymptotic behavior of such models and provides consistent estimation of its parameters as shown in the last part of the paper. This paves a new way for analysis of more sophisticated tensor decomposition methods and the understanding of more general tensor models through random tensor theory.

\bibliographystyle{IEEEbib}
{\small
\bibliography{example}}

\end{document}